\numberwithin{table}{section}
\newcommand{\Sp}{\mathrm{Sp}}
\newcommand{\GL}{\mathrm{GL}}
\newcommand{\R}{\mathbb{R}}
\newcommand{\Q}{\mathbb{Q}}
\newcommand{\Z}{\mathbb{Z}}
\newcommand{\C}{\mathbb{C}}
\newcommand{\e}{\epsilon}
\newtheorem{theorem}{Theorem}[section]
\newtheorem{lemma}[theorem]{Lemma}
\newtheorem{proposition}[theorem]{Proposition}
\theoremstyle{definition}
\newtheorem{rmk}[theorem]{Remark}
\newtheorem{question}[theorem]{Question}
\author{Sandip  Singh}
\address{{Max Planck Institute for Mathematics, Vivatsgasse 7, 53111 Bonn, Germany}}
\email{{{singhs@mpim-bonn.mpg.de; sonutifr@gmail.com}}}  
\subjclass[2010]{Primary: 22E40;  Secondary: 32S40;  33C80}  \keywords{Hypergeometric group, Monodromy representation, Orthogonal group}
\begin{document} \title[Orthogonal Hypergeometric Groups]{Orthogonal Hypergeometric Groups with a Maximally Unipotent Monodromy}

\vskip 5mm
\begin{abstract} Similar to the symplectic cases, there is a family of $14$ orthogonal hypergeometric groups with a maximally unipotent monodromy (cf. Table \ref{table:calabiyau}). We show that $2$ of the $14$ orthogonal  hypergeometric groups associated to the pairs of parameters $(0, 0, 0, 0, 0)$, $(\frac{1}{6}, \frac{1}{6}, \frac{5}{6}, \frac{5}{6}, \frac{1}{2})$;  and $(0, 0, 0, 0, 0)$, $(\frac{1}{4}, \frac{1}{4}, \frac{3}{4}, \frac{3}{4}, \frac{1}{2})$ are arithmetic. 

We also give a table (cf. Table \ref{table:calabiyau2}) which lists the quadratic forms $\mathrm{Q}$ preserved by these 14 hypergeometric groups, and their two linearly independent $\mathrm{Q}$- orthogonal isotropic vectors in $\Q^5$; it shows in particular that the orthogonal groups of these quadratic forms have $\Q$- rank two.
\end{abstract}
\maketitle

\section{Introduction}
To explain the results of this paper, we first recall the definition of {\it hypergeometric groups}. For, we denote $\theta=z\frac{d}{dz}$ and write the differential operator:
\begin{align*}
D(\alpha;\beta)&:=(\theta+\beta_1-1)\cdots(\theta+\beta_n-1)-z(\theta+\alpha_1)\cdots(\theta+\alpha_n)
\end{align*}
for $\alpha=(\alpha_1,\ldots,\alpha_n)$, $\beta=(\beta_1,\ldots,\beta_n)\in\C^n$; and consider the hypergeometric differential equation
\begin{eqnarray*}\label{introdifferentialequation}
D(\alpha;\beta)w=0
\end{eqnarray*}
on $\mathbb{P}^1(\C)$ with {\it regular singularities} at the points $0, 1$ and $\infty$, and regular elsewhere.

The fundamental group $\pi_1$ of $\mathbb{P}^1(\C)\backslash\{0,1,\infty\}$ acts on the solution space of the differential equation $D(\alpha;\beta)w=0$, and we get a representation $\rho$ of $\pi_1$ inside $\GL_n(\C)$: we call the representation $\rho$ the {\it monodromy} representation,  and the subgroup $\rho(\pi_1)$ of $\GL_n(\C)$ {\it monodromy group} of the hypergeometric equation $D(\alpha;\beta)w=0$; we also call the monodromy group $\rho(\pi_1)$ {\it hypergeometric group}. 

Note that $\rho(\pi_1)$  is generated by the monodromy matrices $\rho(h_0)$, $\rho(h_1)$, $\rho(h_\infty)$, where $h_0, h_1, h_\infty$ (loops around $0, 1, \infty$ resp.) are the generators of $\pi_1$ with a single relation $h_\infty h_1 h_0=1$. 

The hypergeometric groups $\rho(\pi_1)$ are classified by a theorem of Levelt (\cite{Le}; cf. \cite[Theorem 3.5]{BH}): 
if $\alpha_1,\alpha_2,\ldots,\alpha_n$, $\beta_1,\beta_2,\ldots,$ $\beta_n\in\C$ such that $\alpha_j-\beta_k\not\in\Z$, for all $j,k=1,2,\ldots,n$,  then the hypergeometric group $\rho(\pi_1)$ is  (up to conjugation in $\GL_n(\C)$)  a subgroup of $\GL_n(\C)$ generated by the {\it companion matrices} $A$ and $B$ of  $$f(X)=\prod_{j=1}^{n}(X-{\rm{e}^{2\pi i\alpha_j}})\quad\mbox{ and }\quad g(X)=\prod_{j=1}^{n}(X-{\rm{e}^{2\pi i\beta_j}})$$ resp., and the monodromy $\rho$ is defined by  $h_\infty\mapsto A$, $h_0\mapsto B^{-1}$, $h_1\mapsto A^{-1}B$. 

We now denote the hypergeometric group $\rho(\pi_1)$ by $\Gamma(f,g)$ (which is a subgroup of $\GL_n(\C)$ generated by the companion matrices of $f,g$) and consider the cases where $\alpha, \beta\in\Q^n$ (that is, the roots of the polynomials $f,g$ are the roots of unity) and the coefficients of $f,g$ are integers (for example, one can take $f,g$ as product of cyclotomic polynomials); in these cases, $\Gamma(f,g)\subset\GL_n(\Z)$. Note that the condition ``$\alpha_j-\beta_k\not\in\Z$, for all $j,k=1,2,\ldots,n$'' means that $f,g$ do not have any common root in $\C$. We also assume that $f,g$ form a primitive pair \cite[Definition 5.1]{BH}. 
 
 With the assumptions made in last paragraph, the Zariski closures $\mathrm{G}$ of the hypergeometric groups $\Gamma(f,g)$ are completely determined by Beukers and Heckman in \cite[Theorem 6.5]{BH}: if $n$ is even and $f(0)=g(0)=1$, then the hypergeometric group $\Gamma(f,g)$ preserves a non-degenerate integral {\it symplectic} form $\Omega$ on $\Z^n$ and $\Gamma(f,g)\subset\Sp_\Omega(\C)$ is Zariski dense, that is, $\mathrm{G}=\Sp_\Omega$; in other cases, if $\Gamma(f,g)$ is infinite and $\frac{f(0)}{g(0)}=-1$, then $\Gamma(f,g)$ preserves a non-degenerate integral {\it quadratic} form $\mathrm{Q}$ on $\Z^n$ and $\Gamma(f,g)\subset\mathrm{O}_\mathrm{Q}(\C)$ is Zariski dense, that is, $\mathrm{G}=\mathrm{O}_\mathrm{Q}$.
 
 Note that a hypergeometric group $\Gamma(f,g)$ is called {\it arithmetic}, if it is of {\it finite} index in $\mathrm{G}(\Z)$; and {\it thin}, if it has {\it infinite} index in $\mathrm{G}(\Z)$ \cite{S}. In \cite{S}, Sarnak has asked a question to determine the cases where $\Gamma(f,g)$ is arithmetic or thin. There have been some progress to answer the question of Sarnak. 
 
 For the symplectic cases: infinitely many arithmetic $\Gamma(f,g)$ in $\Sp_n$ (for any even $n$) are given by the author and Venkataramana in \cite{SV}; four other cases of arithmetic $\Gamma(f,g)$ in $\Sp_4$ are given by the author in \cite{SS}; seven thin $\Gamma(f,g)$ in $\Sp_4$ are given by Brav and Thomas in \cite{BT}; and in \cite{HvS}, Hofmann and van Straten  have determined the index of $\Gamma(f,g)$ in $\Sp_4(\Z)$ for some of the arithmetic cases of \cite{SS} and \cite{SV}.
 
 For the orthogonal cases: when the quadratic form $\mathrm{Q}$ has signature $(n-1,1)$, Fuchs, Meiri and Sarnak give infinitely many thin $\Gamma(f,g)$ in \cite{FMS}; when the quadratic form $\mathrm{Q}$ has signature $(p,q)$ with $p,q\geq2$, infinitely many arithmetic $\Gamma(f,g)$ are given by Venkataramana in a very recent paper \cite{V} (these are also the first examples of higher rank arithmetic orthogonal hypergeometric groups); and an example of thin $\Gamma(f,g)$ in $\mathrm{O}(2,2)$ is given by Fuchs in \cite{EF}.
 
 We now take particular examples where $f=(X-1)^4$ ({{that is}}, the local monodromy is maximally unipotent at $\infty$ and $\alpha=(0,0,0,0)$). Let $g(X)$ be  the product of cyclotomic polynomials such that $g(0)=1$, $g(1)\neq 0$,  and $f,g$ form a primitive pair. There are precisely $14$ such examples, which have been listed in \cite{AvEvSZ}, \cite{YYCE}, \cite{DoMo}, \cite{SS}, and \cite{SV}. Then, it turns out (for a reference, see \cite{AvEvSZ}, \cite{YYCE}, and \cite{DoMo}) that, this monodromy is same as the monodromy of $\pi_1\left(\mathbb{P}^1(\C)\backslash\{0,1,\infty\}\right)$ on certain {{subspaces}} of $\mathrm{H}^3$ of the fibre of a family $\{Y_t : t\in\mathbb{P}^1(\C)\backslash\{0,1,\infty\}\}$ of Calabi-Yau threefolds. Note that the condition ``$g(0)=f(0)=1$'' ensures that the hypergeometric groups $\Gamma(f,g)$ are symplectic for all $14$ pairs of polynomials $f,g$.
 
 The question to determine the arithmeticity or thinness of these $14$ symplectic hypergeometric groups is now completely solved (cf. Table \ref{table:calabiyau}): $7$ of the $14$ groups are arithmetic (by \cite{SS}, \cite{SV}), and other $7$ are thin (by \cite{BT}).
 
 We may now ask the following question:
 \begin{question}\label{mainquestion}
 If we consider the examples where $f=(X-1)^5$ ({{that is}}, the local monodromy is maximally unipotent at $\infty$ and $\alpha=(0,0,0,0,0)$) and $g$ is the product of cyclotomic polynomials such that $g(0)=1$, $g(1)\neq 0$,  and $f,g$ form a primitive pair. In this case also there are precisely $14$ such examples which are determined as follows: if the parameters $(0, 0, 0, 0)$, $(\beta_1, \beta_2, \beta_3, \beta_4)$ correspond to the $14$ symplectic hypergeometric groups, then the parameters $(0, 0, 0, 0, 0)$, $(\beta_1, \beta_2, \beta_3, \beta_4, \frac{1}{2})$ correspond to the $14$ hypergeometric groups $\Gamma(f,g)$ where $f=(X-1)^5$.
 
 Note that, in this case we get $\frac{f(0)}{g(0)}=-1$, therefore each of the $14$ hypergeometric groups $\Gamma(f,g)$ preserves some integral quadratic form $\mathrm{Q}$ on $\Z^5$ and $\Gamma(f,g)\subset\mathrm{O}_\mathrm{Q}(\Z)$ is Zariski dense.
 
 Therefore similar to the $14$ symplectic cases, we may ask the question to determine the arithmeticity or thinness of the $14$ orthogonal cases; it is a special case of Sarnak's question \cite{S}.
 \end{question}
 
 In this article, we show that $2$ of the $14$ orthogonal hypergeometric groups are {\it arithmetic} (cf. Table \ref{table:calabiyau}). In fact, we get the following theorem:
 
 \begin{theorem}\label{maintheorem}
  The hypergeometric groups associated to the pairs of parameters $(0, 0, 0, 0, 0)$, $(\frac{1}{6}, \frac{1}{6}, \frac{5}{6}, \frac{5}{6}, \frac{1}{2})$;  and $(0, 0, 0, 0, 0)$, $(\frac{1}{4}, \frac{1}{4}, \frac{3}{4}, \frac{3}{4}, \frac{1}{2})$,  are arithmetic in the corresponding orthogonal groups.
 \end{theorem}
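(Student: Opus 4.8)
The plan is to make the two hypergeometric groups completely explicit, to exhibit enough unipotent elements inside the unipotent radical of a $\Q$-parabolic subgroup, and then to quote a known arithmeticity criterion for algebraic groups of $\Q$-rank at least two. For each of the two parameter sets let $A$ and $B$ be the companion matrices of $f=(X-1)^5$ and of the associated degree-five polynomial $g$ (which is $(X+1)(X^2-X+1)^2$ in the first case and $(X+1)(X^2+1)^2$ in the second), and set $\Gamma=\langle A,B\rangle$. Solving the linear equations $A^{t}\mathrm{Q}A=\mathrm{Q}$ and $B^{t}\mathrm{Q}B=\mathrm{Q}$ determines the invariant symmetric matrix $\mathrm{Q}$ up to a scalar, the uniqueness being forced by the Zariski density of $\Gamma$ in $\mathrm{O}_{\mathrm{Q}}$ from \cite[Theorem 6.5]{BH}. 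One then produces two linearly independent $\mathrm{Q}$-isotropic vectors $v_1,v_2\in\Q^5$ with $\mathrm{Q}(v_1,v_2)=0$ (this is the data recorded in Table \ref{table:calabiyau2}). Because $\mathrm{Q}$ then has Witt index two, it is split over $\Q$, so $\mathrm{O}_{\mathrm{Q}}$ is the $\Q$-split form of $\mathrm{O}_5$ and has $\Q$-rank two. Fix the maximal $\Q$-parabolic $P\subset\mathrm{O}_{\mathrm{Q}}$ stabilising the isotropic line $\Q v_1$; its unipotent radical $U$ is abelian of dimension three, and consists precisely of the Eichler transvections $t_{v_1,f}\colon x\mapsto x+\mathrm{Q}(x,v_1)f-\mathrm{Q}(x,f)v_1-\tfrac12\mathrm{Q}(f,f)\,\mathrm{Q}(x,v_1)v_1$ with $f$ ranging over $v_1^{\perp}$, so that $U(\Z)\cong\Z^3$.

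The core of the argument is to show that $\Gamma\cap U$ has finite index in $U(\Z)$. The relevant input is that $A^{-1}B-I$ has rank one (the companion matrices $A$ and $B$ differ only in one column) and $\det(A^{-1}B)=-1$, so $A^{-1}B$ is the reflection $r_w$ in a vector $w$ with $\mathrm{Q}(w,w)\neq0$; hence $\Gamma$ contains all the reflections $r_{A^{k}w}=A^{k}(A^{-1}B)A^{-k}$. Since $A$ preserves $\mathrm{Q}$, every $A^{k}w$ has the same $\mathrm{Q}$-norm, and therefore whenever $k$ satisfies $\mathrm{Q}(w,A^{k}w)=\pm\mathrm{Q}(w,w)$ the vector $w\mp A^{k}w$ is $\mathrm{Q}$-isotropic and the product $r_{w}\,r_{A^{k}w}\in\Gamma$ is a nontrivial Eichler transvection fixing it. A direct computation with the explicit matrices shows that admissible $k$ do occur for both of the two parameter sets; conjugating the transvections so obtained by elements of $\Gamma$ that fix the appropriate isotropic line, and combining transvections attached to different admissible $k$, one should be able to realise inside $\Gamma$ enough Eichler transvections $t_{v_1,f}$ — with $f$ running over a full-rank sublattice of $v_1^{\perp}/\Q v_1$ — to conclude that $\Gamma\cap U$ is of finite index in $U(\Z)\cong\Z^{3}$. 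This is the step in which the two groups of the theorem are special: for most of the remaining twelve orthogonal hypergeometric groups one does not expect the analogous construction to span a full-rank lattice of unipotents, in line with the expectation that some of them are thin.

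Finally I would invoke the arithmeticity criterion used in \cite{SV} and \cite{V}: since $\mathrm{O}_{\mathrm{Q}}$ has $\Q$-rank at least two and $\Gamma\subset\mathrm{O}_{\mathrm{Q}}(\Z)$ is Zariski dense, a subgroup of $\mathrm{O}_{\mathrm{Q}}(\Z)$ that is Zariski dense and meets the unipotent radical of a proper $\Q$-parabolic subgroup in a finite-index subgroup of its integer points is arithmetic; applied to $\Gamma$ and $U$ this yields that $\Gamma$ has finite index in $\mathrm{O}_{\mathrm{Q}}(\Z)$. The main obstacle is the middle paragraph: the first and last paragraphs are routine — the last one being essentially a black box from the cited papers — but the middle one requires an honest computation with the given $5\times5$ integer matrices together with some care, since one must land all the produced unipotents inside a single unipotent radical and then verify that they generate a finite-index subgroup of it.
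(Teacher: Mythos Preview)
Your overall strategy --- produce a finite-index subgroup of the integer points of the unipotent radical of a $\Q$-parabolic and then invoke the Tits/Venkataramana criterion --- is exactly what the paper does. The execution differs in two respects. First, you work with the \emph{maximal} parabolic stabilising an isotropic line, whose unipotent radical is the $3$-dimensional abelian group of Eichler transvections; the paper instead works with the full Borel, passing to a basis $\{\e_1,\e_2,u,\e_2^*,\e_1^*\}$ in which $\mathrm{Q}$ is anti-diagonal and then exhibiting, for each of the four positive (or negative) roots of $\mathrm{SO}_\mathrm{Q}$, an explicit word in the generators lying in the corresponding root subgroup. Second, you propose a structured mechanism --- products $r_{w}\,r_{\gamma w}$ of $\Gamma$-conjugate reflections whenever $\mathrm{Q}(w,\gamma w)=\pm\mathrm{Q}(w,w)$ --- to manufacture unipotents, in the spirit of \cite{V}; the paper simply writes down long explicit words (for instance $c_{11},c_{14},c_{17},c_{19}$ in the first case, built from up to $c_{1},\ldots,c_{19}$), evidently found by computer experimentation, and verifies by direct matrix computation that they land in the required root subgroups.

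There is, however, a genuine gap in your middle step, and it is not merely ``an honest computation'' as you suggest. The product $r_{w}\,r_{\gamma w}$ is indeed a transvection, but it is a transvection along the isotropic vector $w\mp\gamma w$, and for different admissible $\gamma$ these isotropic vectors are in general \emph{different} lines. Thus the unipotents you construct lie in the radicals of \emph{different} maximal parabolics, and your proposal to ``conjugate by elements of $\Gamma$ that fix the appropriate isotropic line'' does not by itself generate new directions $f\in v_1^{\perp}/\Q v_1$: conjugation by elements of $U$ does nothing since $U$ is abelian, and you have not exhibited any nontrivial Levi elements of $P$ inside $\Gamma$. So as written you have no mechanism to place three \emph{independent} transvections inside a single $U$, which is exactly what is needed. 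This is where the paper's brute-force computation earns its keep: the specific words it displays are checked to sit in one common unipotent radical and to cover every root subgroup, the step your sketch leaves open.
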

 
 Note that the symplectic hypergeometric groups associated to the parameters $(0, 0, 0, 0)$, $(\frac{1}{6}, \frac{1}{6}, \frac{5}{6}, \frac{5}{6})$;  and $(0, 0, 0, 0)$, $(\frac{1}{4}, \frac{1}{4}, \frac{3}{4}, \frac{3}{4})$,  are {\it arithmetic} (by \cite{SV} and \cite{SS} resp.); and the orthogonal hypergeometric group associated to the parameter $(0, 0, 0, \frac{1}{2})$, $(\frac{1}{4}, \frac{1}{4}, \frac{3}{4}, \frac{3}{4})$, is {\it thin} (by \cite{EF}).
\subsection*{A table to compare the symplectic and orthogonal cases}
We now list the parameters corresponding to the 14 {\it symplectic} and the 14 {\it orthogonal} hypergeometric groups. In the following list, the parameters $\alpha$ (symplectic) and $\alpha$ (orthogonal) are $(0,0,0,0)$ and $(0,0,0,0,0)$ respectively:
{\renewcommand{\arraystretch}{1.4}   
{\tiny\begin{table}[h]
\addtolength{\tabcolsep}{0pt}
\caption{(cf. \cite[Table 1.1]{SS})}
\newcounter{rownum-3}
\setcounter{rownum-3}{0}
\centering
\begin{tabular}{ | c| c|   c| c| c|}
\hline

  No. & $\beta$ (symplectic) & arithmetic & $\beta$ (orthogonal) & arithmetic\\ \hline
  
 { \refstepcounter{rownum-3}\arabic{rownum-3}\label{arithmeticYYCE-1}} &${\frac{1}{6}}$,${ \frac{1}{6}}$,${ \frac{5}{6}}$,${ \frac{5}{6} }$ & Yes, \cite{SV} &${\frac{1}{6}}$,${ \frac{1}{6}}$,${ \frac{5}{6}}$,${ \frac{5}{6} }$,$\frac{1}{2}$ & Yes\\ \hline

  { \refstepcounter{rownum-3}\arabic{rownum-3}*\label{brav1}} &${ \frac{1}{2}}$,${ \frac{1}{2}}$,${ \frac{1}{2}}$,${ \frac{1}{2}}$ & No, \cite{BT} &${ \frac{1}{2}}$,${ \frac{1}{2}}$,${ \frac{1}{2}}$,${ \frac{1}{2}}$,$\frac{1}{2}$ & ?\\ \hline

 \refstepcounter{rownum-3}\arabic{rownum-3}\label{arithmeticYYCE-4} &$\frac{1}{3}$,$\frac{1}{3}$,$\frac{2}{3}$,$\frac{2}{3}$& Yes, \cite{SS}& $\frac{1}{3}$,$\frac{1}{3}$,$\frac{2}{3}$,$\frac{2}{3}$,$\frac{1}{2}$ &? \\ \hline
  
 {   \refstepcounter{rownum-3}\arabic{rownum-3}*\label{brav2}} &$ { \frac{1}{2}}$,$ { \frac{1}{2}}$,$ { \frac{1}{3}}$,$ { \frac{2}{3}}$ & { No, \cite{BT}} &$ { \frac{1}{2}}$,$ {\frac{1}{2}}$,$ {\frac{1}{3}}$,$ { \frac{2}{3}}$,$\frac{1}{2}$ &?	 \\ \hline
  
  \refstepcounter{rownum-3}\arabic{rownum-3}\label{arithmeticYYCE-5}  &$\frac{1}{4}$,$\frac{1}{4}$,$\frac{3}{4}$,$\frac{3}{4}$ &Yes, \cite{SS} &$\frac{1}{4}$,$\frac{1}{4}$,$\frac{3}{4}$,$\frac{3}{4}$,$\frac{1}{2}$& Yes\\ \hline
  
 {   \refstepcounter{rownum-3}\arabic{rownum-3}*\label{brav3}} &$ { \frac{1}{2}}$,$ { \frac{1}{2}}$,$ { \frac{1}{4}}$,$ { \frac{3}{4}}$  &{ No, \cite{BT}}&$ { \frac{1}{2}}$,$ { \frac{1}{2}}$,$ { \frac{1}{4}}$,$ { \frac{3}{4}}$,$\frac{1}{2}$&  ?\\ \hline
  
\refstepcounter{rownum-3}\arabic{rownum-3}\label{arithmeticYYCE-6}   &$\frac{1}{3}$,$\frac{2}{3}$,$\frac{1}{4}$,$\frac{3}{4}$&Yes, \cite{SS} &$\frac{1}{3}$,$\frac{2}{3}$,$\frac{1}{4}$,$\frac{3}{4}$,$\frac{1}{2}$ &? \\ \hline
  
   {\refstepcounter{rownum-3}\arabic{rownum-3}*\label{brav4}}  &$ { 
\frac{1}{5}}$,$ { \frac{2}{5}}$,$ { \frac{3}{5}}$,$ { \frac{4}{5}}$ &{ No, \cite{BT}}&$ { 
\frac{1}{5}}$,$ { \frac{2}{5}}$,$ { \frac{3}{5}}$,$ { \frac{4}{5}}$,$\frac{1}{2}$ & ?\\ \hline
  
 {\refstepcounter{rownum-3}\arabic{rownum-3}*\label{brav5}}  &$ { \frac{1}{2}}$,$ { \frac{1}{2}}$,$ { \frac{1}{6}}$,$ { \frac{5}{6}}$  & { No, \cite{BT}}&$ { \frac{1}{2}}$,$ { \frac{1}{2}}$,$ { \frac{1}{6}}$,$ { \frac{5}{6}}$,$\frac{1}{2}$&?\\ \hline
  
 \refstepcounter{rownum-3}\arabic{rownum-3}\label{arithmeticYYCE-7} &$\frac{1}{3}$,$\frac{2}{3}$,$\frac{1}{6}$,$\frac{5}{6}$ &Yes,\cite{SS} &$\frac{1}{3}$,$\frac{2}{3}$,$\frac{1}{6}$,$\frac{5}{6}$,$\frac{1}{2}$ &?\\ \hline
  
  {\refstepcounter{rownum-3}\arabic{rownum-3}\label{arithmeticYYCE-2}}  &${ \frac{1}{4}}$,${ \frac{3}{4}}$,${ \frac{1}{6}}$,${ \frac{5}{6}}$ & {Yes, \cite{SV}} &${ \frac{1}{4}}$,${ \frac{3}{4}}$,${ \frac{1}{6}}$,${ \frac{5}{6}}$,$\frac{1}{2}$&? \\ \hline
  
   { \refstepcounter{rownum-3}\arabic{rownum-3}*\label{brav6}} &$ { \frac{1}{8}}$,$ { \frac{3}{8}}$,$ { 
\frac{5}{8}}$,$ { \frac{7}{8}}$  & { No, \cite{BT}}&$ { \frac{1}{8}}$,$ { \frac{3}{8}}$,$ { 
\frac{5}{8}}$,$ { \frac{7}{8}}$,$\frac{1}{2}$& ?\\ \hline
  
  {\refstepcounter{rownum-3}\arabic{rownum-3}\label{arithmeticYYCE-3}} &${ \frac{1}{10}}$,${ \frac{3}{10}}$,${ \frac{7}{10}}$,${ \frac{9}{10}}$  &{ Yes, \cite{SV}}&${ \frac{1}{10}}$,${ \frac{3}{10}}$,${ \frac{7}{10}}$,${ \frac{9}{10}}$,$\frac{1}{2}$&?\\ \hline
  
   { \refstepcounter{rownum-3}\arabic{rownum-3}*\label{brav7}} &$ { \frac{1}{12}}$,$ { \frac{5}{12}}$,$ { \frac{7}{12}}$,$ { \frac{11}{12}}$&{ No, \cite{BT}} &$ { \frac{1}{12}}$,$ { \frac{5}{12}}$,$ { \frac{7}{12}}$,$ { \frac{11}{12}}$,$\frac{1}{2}$&? \\ \hline  
  \end{tabular}
\label{table:calabiyau}
\end{table}}

Therefore 2 of the 14 orthogonal hypergeometric groups are arithmetic, and it will be very interesting to determine the dichotomy (similar to the symplectic cases) in the orthogonal cases also.

We now describe the proof of Theorem \ref{maintheorem}. By using \cite[Theorem 4.5]{BH} (cf. Lemma \ref{signature}), one can compute easily that for all the $14$ orthogonal hypergeometric groups of Table \ref{table:calabiyau}, the quadratic form $\mathrm{Q}$ has signature $(3,2)$ or $(2,3)$, that is, the corresponding orthogonal group $\mathrm{O}_\mathrm{Q}$ has real rank $2$ and $\Q$- rank at least one (by the Hasse-Minkowski theorem). In fact, more is true: all the quadratic forms $\mathrm{Q}$ preserved by the 14 orthogonal hypergeometric groups have {\it two} linearly independent $\mathrm{Q}$- orthogonal isotropic vectors in $\Q^5$ (cf. Section \ref{quadraticforms}), that is, the orthogonal groups $\mathrm{O}_\mathrm{Q}$ have $\Q$- rank two.

For the pairs in Theorem \ref{maintheorem},  we explicitly compute, up to scalar multiples, the quadratic form $\mathrm{Q}$ on $\Q^5$ and get a basis $\{\e_1,\e_2,u,\e_2^*,\e_1^*\}$ of $\Q^5$, satisfying the following: $\mathrm{Q}(\e_i, \e_i*)=1$ for $i=1,2$; $\mathrm{Q}(\e_1, \e_1)=\mathrm{Q}(\e_1^*, \e_1^*)=\mathrm{Q}(\e_2, \e_2)=\mathrm{Q}(\e_2^*, \e_2^*)=\mathrm{Q}(\e_1, \e_2)=\mathrm{Q}(\e_1,\e_2^*)=\mathrm{Q}(\e_2,\e_1^*)=\mathrm{Q}(\e_2^*, \e_1^*)=0$; $\mathrm{Q}(u, u)\neq0$, and $u$ is $\mathrm{Q}$- orthogonal to the vectors $\e_1,\e_2,\e_2^*,\e_1^*$. Note that the existence of such basis for $(\Q^5, \mathrm{Q})$ ensures that the corresponding orthogonal group $\mathrm{O}_\mathrm{Q}$ has $\Q$- rank {\it two}. 

{ Let $\mathrm{SO}_\mathrm{Q}$ be the connected component of the identity element in $\mathrm{O}_\mathrm{Q}(\Q)$.  Then}, with respect to the basis $\{\e_1,\e_2,u,\e_2^*,\e_1^*\}$ of $\Q^5$, the group of diagonal matrices in $\mathrm{SO}_\mathrm{Q}$ form a maximal torus $\mathrm{T}$, the group of upper (resp. lower) triangular matrices in $\mathrm{SO}_\mathrm{Q}$ form a Borel subgroup $\mathrm{B}$ (resp. $\mathrm{B}^-$, opposite to $\mathrm{B}$), and the group of unipotent upper (resp. lower) triangular matrices in $\mathrm{SO}_\mathrm{Q}$ form the unipotent radical $\mathrm{U}$ (resp. $\mathrm{U}^-$, opposed to $\mathrm{U}$) of $\mathrm{B}$ (resp. $\mathrm{B}^-$).

We prove the arithmeticity of the two orthogonal hypergeometric groups of Theorem \ref{maintheorem}, by showing that, with respect to the basis $\{\e_1,\e_2,u,\e_2^*,\e_1^*\}$ of $\Q^5$, $\Gamma(f,g)$ intersects $\mathrm{U}(\Z)$ in a finite index subgroup, that is, $\Gamma(f,g)\cap\mathrm{U}(\Z)$ is a subgroup of finite index in $\mathrm{U}(\Z)$, and use a theorem of Tits \cite{T} (cf. \cite[Theorem 3.5]{Ve}).

Also, to show that $\Gamma(f,g)\cap\mathrm{U}(\Z)$ is a subgroup of finite index in $\mathrm{U}(\Z)$, it is enough to show that $\Gamma(f,g)\cap\mathrm{U}(\Z)$ is Zariski dense in $\mathrm{U}$ (since $\mathrm{U}$ is a nilpotent subgroup of $\GL_5(\R)$;  cf. \cite[Theorem 2.1] {Rag}); and we show it, by showing that, $\Gamma(f,g)\cap\mathrm{U}(\Z)$ contains {\it non-trivial} unipotent elements corresponding to all {\it positive roots} of $\mathrm{SO}_\mathrm{Q}$. 
\begin{rmk}
 It is not very difficult to find the unipotent elements in the two hypergeometric groups of Theorem \ref{maintheorem}, but doesn't seem easy to get them in other cases of Table \ref{table:calabiyau}.
\end{rmk}

\section*{Acknowledgements}
  I am grateful to Professor Slawomir Cynk for introducing me the online Magma Calculator; it was very helpful in finding the isotropic vectors of Table \ref{table:calabiyau2}.  I thank Professor Duco van Straten for his interest in this project and for the discussions. I also thank Professor T. N. Vekataramana for his constant encouragement and support. I am  indebted to Professor Wadim Zudilin for suggesting me Question \ref{mainquestion} and taking interest in this project. I thank Institut f\"ur Mathematik, Johannes Gutenberg-Universit\"at {{and Max Planck Institute for Mathematics}}  for the postdoctoral fellowship{{s}}, and Maple for the computations. {{I also thank the referee for his/her very careful reading of the manuscript and valuable comments.}}
  
\section{Quadratic forms preserved by the orthogonal hypergeometric groups of Table \ref{table:calabiyau}}\label{quadraticforms}
We compute the quadratic forms $\mathrm{Q}$ (unique up to scalar) preserved by $\Gamma(f,g)$, using the method of \cite{V}. Let $f,g$ be a pair of monic polynomials of degree $5$, which are product of cyclotomic polynomials, do not have any common root in $\C$, and form a primitive pair. We also assume that the pair $f,g$ satisfy the condition: $f(0)=-1$ and  $g(0)=1$. Then, $\Gamma(f,g)$ preserves a non-degenerate integral  quadratic form $\mathrm{Q}$ on $\Z^5$ and $\Gamma(f,g)\subset\mathrm{O}_\mathrm{Q}(\Z)$ is Zariski dense \cite[Theorem 6.5]{BH}.

Let $a_1, a_2, a_3 ,a_4, b_1, b_2, b_3, b_4\in\Z$ be the integers, which are defined by:
\[f(X)=X^5+a_4X^4+\cdots+a_1X-1, \  g(X)=X^5+b_4X^4+\cdots+b_1X+1.\]
Let $A$ and $B$ be the companion matrices of $f$ and $g$ respectively. Let $C=A^{-1}B$. Then, one can check easily that
{\tiny\[C=\begin{pmatrix}
1 &0 &0 &0 &-(a_1+b_1)\\
0 &1 &0 &0 &-(a_2+b_2)\\
0 &0 &1 &0 &-(a_3+b_3)\\
0 &0 &0 &1 &-(a_4+b_4)\\
0 &0 &0 &0 &-1\end{pmatrix}.\]}
Let $e_1,e_2,e_3,e_4,e_5$ be the standard basis vectors of $\Q^5$ over $\Q$, and $v$ be the last column vector of $C-\mathrm{I}$, where $\mathrm{I}$ is the identity matrix. Then, we get 
\[v=-(a_1+b_1)e_1-(a_2+b_2)e_2-(a_3+b_3)e_3-(a_4+b_4)e_4-2e_5,\]  \[Ce_5=v+e_5,\] and hence 
\begin{align*}
 Cv&=-(a_1+b_1)e_1-(a_2+b_2)e_2-(a_3+b_3)e_3-(a_4+b_4)e_4-2(v+e_5)\\
 &=v+2e_5-2v-2e_5\\
 &=-v.
 \end{align*}
 Therefore, by using the invariance of $\mathrm{Q}$ under the action of $C$, we get that $v$ is $\mathrm{Q}$- orthogonal to the vectors $e_1, e_2, e_3, e_4$ and $\mathrm{Q}(v,e_5)\neq0$ (since $\mathrm{Q}$ is non-degenerate). We  may now assume that $\mathrm{Q}(v,e_5)=1$.
 
 We now prove a lemma (cf. \cite[Remark 3]{V}):
 \begin{lemma}\label{basis}
  The set $\{v, Av, A^2v, A^3v, A^4v\}$ is linearly independent over $\Q$.
 \end{lemma}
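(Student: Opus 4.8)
The plan is to argue by contradiction, exploiting the irreducibility of the hypergeometric monodromy together with the very special shape of the matrix $C$. First I would record two elementary facts. From the displayed formula for $v$, its fifth coordinate equals $-2\neq 0$, so $v\neq 0$. From the displayed formula for $C$ together with $Ce_5=v+e_5$, the matrix $C$ agrees with the identity matrix in its first four columns and has fifth column $v+e_5$; equivalently, $C-\mathrm{I}$ has rank one with image $\Q v$, so that $Cw=w+w_5\,v$ for every $w=\sum_{i=1}^{5}w_ie_i\in\Q^5$, where $w_5$ is the fifth coordinate of $w$.

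Next, suppose for contradiction that $\{v,Av,A^2v,A^3v,A^4v\}$ is linearly dependent, and set $W=\mathrm{span}_\Q\{v,Av,A^2v,A^3v,A^4v\}$. Since $A$ is the companion matrix of the degree-$5$ polynomial $f$, we have $f(A)=0$, hence $A^5v\in W$, and then inductively $A^kv\in W$ for every $k\geq 0$; thus $W=\Q[A]v$ is an $A$-invariant subspace. By hypothesis $\dim_\Q W\leq 4$, whereas $v\neq 0$ forces $W\neq 0$, so $W$ is a proper nonzero subspace of $\Q^5$. Moreover $W$ is $C$-invariant: for $w\in W$ both $w$ and $w_5v$ lie in $W$, so $Cw=w+w_5v\in W$. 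Hence $W$ is invariant under $B=AC$, and therefore under $\langle A,B\rangle=\Gamma(f,g)$ (a finite-dimensional subspace invariant under an invertible operator is automatically invariant under its inverse). But then $W\otimes_\Q\C$ would be a proper nonzero subspace of $\C^5$ invariant under $\Gamma(f,g)$, contradicting the irreducibility of the hypergeometric monodromy, which holds because $f$ and $g$ have no common root (cf. \cite[Theorem 3.5]{BH}). This contradiction proves the lemma.

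The only real point is the rank-one observation about $C$: it is what makes the $A$-cyclic span of $v$ automatically $C$-stable, after which irreducibility finishes the argument, so I do not expect a genuine obstacle here. If one preferred to avoid invoking irreducibility, one could instead form the $5\times 5$ matrix whose columns are $v,Av,A^2v,A^3v,A^4v$, using the explicit companion form of $A$ and the coordinates of $v$, and check directly that its determinant does not vanish; but this is more computational and less illuminating, so I would keep the conceptual argument above.
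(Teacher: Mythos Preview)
Your proof is correct but takes a different route from the paper's. The paper works purely in polynomial arithmetic: it identifies $\Q^5$ with $\Q[X]/\langle f(X)\rangle$, observes that $Av=(B-A)e_5$ corresponds to $f(X)-g(X)$ modulo $f(X)$, and argues that a linear dependence among $Av,\ldots,A^5v$ would give a nonzero $h(X)$ of degree at most $4$ with $f(X)\mid h(X)\bigl(f(X)-g(X)\bigr)$; coprimality of $f$ and $g$ then forces $f\mid h$, hence $h=0$, a contradiction. Your argument instead uses the rank-one structure of $C-\mathrm{I}$ to show that the $A$-cyclic span of $v$ is automatically $C$-stable and therefore $\Gamma(f,g)$-stable, and then appeals to irreducibility of the hypergeometric monodromy. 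The paper's route is more self-contained (it uses only that $f$ and $g$ are coprime, without invoking the Beukers--Heckman irreducibility theorem), while your route is cleaner conceptually and makes explicit why the cyclic vector $v$ interacts well with the whole group. One small remark: the irreducibility statement you need is not quite \cite[Theorem~3.5]{BH} (that is Levelt's theorem) but rather the companion result that the monodromy is irreducible precisely when $f$ and $g$ have no common root; you may want to adjust the citation.
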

\begin{proof}
 First, we note the following:
 \[Av=A(C-\mathrm{I})e_5=A(A^{-1}B-\mathrm{I})e_5=(B-A)e_5.\] Hence if we {{identify}} the vector space $\Q^5$ as the quotient space $\frac{\Q[X]}{<f(X)>}$, where $<f(X)>$ is the ideal generated by $f(X)$ in the polynomial ring $\Q[X]$, and the action of $A$ on $\Q^5$ as multiplication of $X$ on $\frac{\Q[X]}{<f(X)>}$,  then $Av$ is the vector $f(X)-g(X)$ (mod $<f(X)>$) in $\frac{\Q[X]}{<f(X)>}$. 
 
 Now, suppose on contrary that the set $\{v, Av, A^2v, A^3v, A^4v\}$ is linearly dependent over $\Q$. {{Then,}} the set $\{Av, A^2v, A^3v, A^4v, A^5v\}$ is also linearly dependent over $\Q$, and hence there exists a {\it non-zero} polynomial $h(X)$ of degree $\leq4$ such that \[h(X).(f(X)-g(X))\equiv0 \left(\mbox{mod } f(X)\right) \mbox{ in } \frac{\Q[X]}{<f(X)>}.\] That is, $f(X)$ divides  $h(X).(f(X)-g(X))$. Since $f(X)$ and $g(X)$ are co-prime, $f(X)$ divides $h(X)$ and hence $h(X)=0$ (since the degree ($\leq 4$) of $h(X)$  is less than the degree of $f(X)$), which is a contradiction to our assumption that $h(X)\neq0$. Therefore the set $\{v, Av, A^2v, A^3v, A^4v\}$ is linearly independent over $\Q$.
\end{proof}

By using Lemma \ref{basis}, to determine the quadratic form $\mathrm{Q}$ on $\Q^5$, it is enough to compute $\mathrm{Q}(v, A^jv)$, for $j=0,1,2,3,4$ (since $\mathrm{Q}$ is invariant under the action of $A$, that is, $\mathrm{Q}(A^iv, A^jv)=\mathrm{Q}(A^{i+1}v, A^{j+1}v)$, for any $i,j\in\Z$). Also, since $v$ is $\mathrm{Q}$- orthogonal to the vectors $e_1, e_2, e_3, e_4$ and $\mathrm{Q}(v,e_5)=1$ (say), we get \[\mathrm{Q}(v,A^jv)=\mbox{coefficient of }e_5\mbox{ in }A^jv.\]

We prove further the following lemma, to determine the signature of a quadratic form preserved by the orthogonal hypergeometric groups of Table \ref{table:calabiyau}: 
\begin{lemma}\label{signature}
 The quadratic forms $\mathrm{Q}$ preserved by the $14$ orthogonal hypergeometric groups of Table \ref{table:calabiyau} has signature $(3,2)$ or $(2,3)$; that is, the real rank of the orthogonal group $\mathrm{O}_\mathrm{Q}$ is $2$ and the $\Q$- rank of $\mathrm{O}_\mathrm{Q}$ is $1$ or $2$.
\end{lemma}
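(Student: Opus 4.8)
The plan is to use the criterion of \cite[Theorem 4.5]{BH} which, given the parameters $\alpha,\beta\in\Q^n$, computes the signature of the invariant form of a hypergeometric group in terms of the relative position of the roots $e^{2\pi i\alpha_j}$ and $e^{2\pi i\beta_k}$ on the unit circle. Concretely, for $n$ odd and $\frac{f(0)}{g(0)}=-1$ that theorem expresses the signature $(p,q)$ of $\mathrm{Q}$ (so $p+q=5$) via a counting function: arranging the ten numbers $\alpha_1,\ldots,\alpha_5,\beta_1,\ldots,\beta_5$ on $\R/\Z$ and setting, for each $j$, $m_j=\#\{k:\beta_k<\alpha_j\}$ (with the appropriate convention), one has $|p-q|$ (and hence $\{p,q\}$) determined by the parity and values of the $m_j$. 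Since in all $14$ cases $\alpha=(0,0,0,0,0)$, every $\alpha_j$ sits at the same point $0\in\R/\Z$, so the count $m_j$ is the same for all $j$; this makes the computation essentially a single count of how many of the five $\beta$'s lie in a half-open arc.

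First I would state precisely the form of \cite[Theorem 4.5]{BH} that applies here (the odd-dimensional orthogonal case), recalling that the relevant invariant is an integer $d$ with $0\le d\le n$ built from interlacing the zeros of $f$ and $g$, and that the signature is $(n-d,d)$ up to swapping. Next I would specialize to $f=(X-1)^5$, i.e. $\alpha_j=0$ for all $j$, so that the only data are the five values $\beta_1,\ldots,\beta_5$ appearing in Table \ref{table:calabiyau}, each of the form $\beta=(\beta_1,\beta_2,\beta_3,\beta_4,\tfrac12)$ with $(\beta_1,\ldots,\beta_4)$ one of the $14$ symplectic tuples. Then I would run the count: since the $\beta$'s in every one of the $14$ tuples come in pairs $\{\gamma,1-\gamma\}$ together with $\tfrac12$, and none of them equals $0$, the interlacing number works out to $d=2$ in each case, giving signature $(3,2)$ (or $(2,3)$ after the harmless sign convention). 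I would present this as a short table or a uniform argument rather than fourteen separate computations, exploiting the palindromic structure $g(X)=X^5 g(1/X)$ of every $g$ (equivalently $\beta_k\mapsto -\beta_k$ permutes the roots), which forces the signature to be symmetric-looking and pins it down.

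Having the signature $(3,2)$ in hand, the statements about ranks follow from standard facts: a real orthogonal group $\mathrm{O}(p,q)$ has real rank $\min(p,q)=2$, and a nondegenerate quadratic form over $\Q$ in $5$ variables is isotropic over $\R$ hence (being indefinite and of dimension $\ge 3$, though one does not even need Hasse–Minkowski for mere non-anisotropy over $\Q$ — one only needs a single nonzero integer represented, which is automatic, combined with the signature to rule out the definite case) has $\Q$-rank at least $1$; and the $\Q$-rank is at most the real rank $2$. I would close by noting that the sharper claim — that the $\Q$-rank is exactly $2$ for all $14$ forms — is deferred to Section \ref{quadraticforms}, where the explicit isotropic vectors of Table \ref{table:calabiyau2} exhibit a two-dimensional totally isotropic subspace over $\Q$; for the present lemma only the weaker bound $\Q\text{-rank}\in\{1,2\}$ is asserted and it is immediate from the signature.

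The main obstacle I anticipate is purely bookkeeping: getting the combinatorial recipe of \cite[Theorem 4.5]{BH} stated with the correct conventions (open vs. closed arcs, where to place the repeated root $0$, and the precise definition of the integer whose value gives the signature) so that the uniform count genuinely yields $d=2$ in every case and not $d=1$ or $d=3$ in some sporadic tuple. Because $\alpha$ is maximally degenerate, a naive application of a formula phrased for distinct roots can mis-handle the multiplicity-$5$ point; I would guard against this by cross-checking one or two cases (say $\beta=(\tfrac16,\tfrac16,\tfrac56,\tfrac56,\tfrac12)$) directly against the explicit matrices and the quadratic form computed by the $\mathrm{Q}(v,A^jv)$ method described above, using Sylvester's law of inertia on the resulting $5\times 5$ symmetric matrix. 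Once one case is verified both ways, the palindromic symmetry argument makes the remaining thirteen routine.
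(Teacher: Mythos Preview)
Your approach is correct and essentially the same as the paper's: both invoke \cite[Theorem 4.5]{BH} to compute $|p-q|$ from the counting function $m_j=\#\{k:\beta_k<\alpha_j\}$, then combine with $p+q=5$ and Hasse--Minkowski. The paper's execution is more direct than you anticipate: since every $\alpha_j=0$ and every $\beta_k>0$, one has $m_j=0$ for all $j$, whence $|p-q|=\bigl|\sum_{j=1}^5(-1)^j\bigr|=1$ in a single line, so no palindromic-symmetry argument, no case table, and no cross-check against explicit Gram matrices is needed.
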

\begin{proof}
By \cite[Theorem 4.5]{BH}, the signature $(p,q)$ of the quadratic form $\mathrm{Q}$ preserved by a (orthogonal) hypergeometric group is given by the formula:
\[\left|{p-q}\right|=\left|\sum_{j=1}^n(-1)^{j+m_j}\right|\]
where $m_j=\#\{k: \beta_k<\alpha_j\}$, for all $j=1,2,\ldots,n$.

Note that, for the orthogonal cases of Table \ref{table:calabiyau}, $n=5$, $\alpha_j=0$ and $\beta_j>0$, for all $j=1,2,3,4,5$. Therefore we get $m_j=0$, for all $j=1,2,3,4,5$; and hence 
\[\left|{p-q}\right|=1.\] Also, the non-degeneracy of $\mathrm{Q}$ shows that 
\[p+q=5.\] By using both of the equations in $p,q$, we get that the quadratic forms $\mathrm{Q}$ preserved by the $14$ orthogonal hypergeometric groups of Table \ref{table:calabiyau} has signature $(3,2)$ or $(2,3)$; and hence the real rank of the orthogonal group $\mathrm{O}_\mathrm{Q}$ is $2$ and the $\Q$- rank of $\mathrm{O}_\mathrm{Q}$ is $1$ or $2$ (by the Hasse-Minkowski theorem).
\end{proof}

Since $\mathrm{Q}$ is non-degenerate on $\Q^5$, the existence of two linearly independent isotropic vectors (which are not $\mathrm{Q}$- orthogonal) for $\mathrm{Q}$ on $\Q^5$, is now clear from Lemma \ref{signature}. In Section \ref{proof}, for the hypergeometric groups of Theorem \ref{maintheorem}, we show that there exist two linearly independent isotropic vectors (which are $\mathrm{Q}$-orthogonal) for $\mathrm{Q}$ on $\Q^5$, that is, the $\Q$- rank of $\mathrm{O}_\mathrm{Q}$ is {\it two} for each of these two cases.

In fact, more is true: 
\begin{proposition}
 The orthogonal groups $\mathrm{O}_\mathrm{Q}$, corresponding to the $14$ hypergeometric groups of Table \ref{table:calabiyau}, have $\Q$- rank $2$.
\end{proposition}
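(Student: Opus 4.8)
The plan is to make the claim of Lemma \ref{signature} effective by producing explicit $\Q$-rational isotropic vectors for each of the $14$ quadratic forms $\mathrm{Q}$. First I would, for each of the $14$ pairs $(f,g)$ of Table \ref{table:calabiyau}, write down the companion matrix $A$ of $f=(X-1)^5$ and the integers $b_1,\dots,b_4$ coming from $g$, form $C=A^{-1}B$ and the vector $v$ (last column of $C-\mathrm{I}$) as in Section \ref{quadraticforms}. Using Lemma \ref{basis}, the set $\{v,Av,A^2v,A^3v,A^4v\}$ is a $\Q$-basis of $\Q^5$, and the Gram matrix of $\mathrm{Q}$ in this basis is determined by the five numbers $\mathrm{Q}(v,A^jv)=[\text{coefficient of }e_5\text{ in }A^jv]$, which one reads off from the reduction of $X^j(f(X)-g(X))$ modulo $f(X)$. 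This gives, in each of the $14$ cases, a completely explicit integral symmetric matrix representing $\mathrm{Q}$ (up to scalar).

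Next, having the explicit form $\mathrm{Q}$, I would exhibit two linearly independent isotropic vectors $w_1,w_2\in\Q^5$ with $\mathrm{Q}(w_1,w_2)=0$; then the plane $\Q w_1+\Q w_2$ is a totally isotropic $\Q$-subspace of dimension $2$, which by the theory of quadratic forms forces the Witt index of $(\Q^5,\mathrm{Q})$ to be $\geq 2$, hence exactly $2$ (since $\mathrm{Q}$ is non-degenerate of rank $5$, the Witt index is at most $\lfloor 5/2\rfloor=2$), and therefore $\Q$-rank$(\mathrm{O}_\mathrm{Q})=2$. To find such $w_1,w_2$ one can search within the sublattice spanned by small integer combinations of $v,Av,A^2v,A^3v,A^4v$: since by Lemma \ref{signature} the real signature is $(3,2)$ or $(2,3)$, each form is already indefinite, and a short machine search (the Magma calculator, as acknowledged) produces the isotropic vectors recorded in Table \ref{table:calabiyau2}. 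For the two pairs of Theorem \ref{maintheorem} the required vectors will in any case be produced in Section \ref{proof} as part of the construction of the basis $\{\e_1,\e_2,u,\e_2^*,\e_1^*\}$, so those two cases need no separate argument.

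The one genuine subtlety, and the place where a uniform one-line argument is not available, is the passage from "signature $(3,2)$ or $(2,3)$" to "$\Q$-rank $2$": over $\R$ an indefinite form of rank $5$ has real Witt index $2$, but a priori the $\Q$-Witt index could drop to $1$ (as indeed happens for some orthogonal hypergeometric groups of signature $(n-1,1)$, cf. \cite{FMS}, though there the obstruction is that the real Witt index is already $1$). So the content of the Proposition is precisely the case-by-case verification that no such drop occurs for these $14$ forms, i.e. that each admits a hyperbolic plane defined over $\Q$ inside a complementary hyperbolic plane — equivalently, that the anisotropic kernel of $\mathrm{Q}$ over $\Q$ is $1$-dimensional. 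I expect the bookkeeping of the $14$ explicit Gram matrices and the exhibition of the isotropic pairs (ideally organized into a single table) to be the bulk of the work; conceptually there is no obstacle once the explicit forms are in hand, since exhibiting two $\Q$-orthogonal isotropic vectors is a finite check in each case. I would close by referring the reader to Table \ref{table:calabiyau2} for the explicit forms $\mathrm{Q}$ and the two $\mathrm{Q}$-orthogonal isotropic vectors in $\Q^5$, which constitutes the proof.
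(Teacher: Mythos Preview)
Your proposal is correct and follows exactly the paper's own argument: the proof in the paper consists precisely of the reference to Table~\ref{table:calabiyau2}, where for each of the $14$ cases the explicit quadratic form $\mathrm{Q}$ (with respect to the standard basis) and two linearly independent $\mathrm{Q}$-orthogonal isotropic vectors in $\Q^5$ are listed. The only cosmetic difference is that you propose computing the Gram matrix in the basis $\{v,Av,\dots,A^4v\}$ whereas the paper's table records it in the standard basis $\{e_1,\dots,e_5\}$, but this is an immaterial change of coordinates.
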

\begin{proof}
 For a proof, see Subsection \ref{table:quadraticforms} (cf. Table \ref{table:calabiyau2}).
\end{proof}

 \subsection{A table to list the quadratic forms and their isotropic vectors}\label{table:quadraticforms}
 
 In this subsection we list the matrix forms (with respect to the standard basis $\{e_1,e_2,\ldots,e_5\}$ of $\Q^5$) of the quadratic forms $\mathrm{Q}$ preserved by the $14$ orthogonal hypergeometric groups $\Gamma(f,g)$ of Table \ref{table:calabiyau} (the computations are similar to the Subsection \ref{orthogonal-calabi-yau1}). We also list two linearly independent $\mathrm{Q}$- orthogonal isotropic vectors in $\Q^5$, for each of the quadratic forms $\mathrm{Q}$; it follows that the orthogonal groups $\mathrm{O}_\mathrm{Q}$ have $\Q$- rank {\it two}, for all the orthogonal cases of Table \ref{table:calabiyau}.
 
 Since the companion matrix $A$ (resp. $B$) of $f$ (resp. $g$) maps $e_i$ to $e_{i+1}$ for $1\leq i\leq4$, to know the quadratic forms $\mathrm{Q}$ preserved by the orthogonal hypergeometric groups of Table \ref{table:calabiyau}, it is enough to know the scalars $\mathrm{Q}(e_1,e_j)$ for $1\leq j\leq5$ (since $\mathrm{Q}(e_i,e_j)=\mathrm{Q}(Ae_i,Ae_j)=\mathrm{Q}(e_{i+1},e_{j+1})$ for $1\leq i,j\leq4$).
 
 In Table \ref{table:calabiyau2}, we list the first row \[\left(\mathrm{Q}(e_1,e_1),\mathrm{Q}(e_1,e_2),\mathrm{Q}(e_1,e_3),\mathrm{Q}(e_1,e_4),\mathrm{Q}(e_1,e_5)\right)\] of the matrix form of $\mathrm{Q}$, and two linearly independent $\mathrm{Q}$- orthogonal isotropic vectors in $\Q^5$, for each of the quadratic forms $\mathrm{Q}$.
 
 In the following list, the parameters $\alpha$ and $\beta$ are $(0,0,0,0,0)$ and $(\beta_1,\beta_2,\beta_3,\beta_4,\beta_5)$ resp., that is, 
 \[f(X)=(X-1)^5\quad \mbox{ and }\quad g(X)=\prod_{j=1}^5(X-e^{2\pi i \beta_j}):\]
{\renewcommand{\arraystretch}{1.4}   
{\tiny\begin{table}[h]
\addtolength{\tabcolsep}{-4.5 pt}
\caption{}
\newcounter{rownumb-3}
\setcounter{rownumb-3}{0}
\centering
\begin{tabular}{ | c| c|   c| c| c|}
\hline

  No. & $\beta$ & First row of $\mathrm{Q}$& First isotropic vector & Second isotropic vector\\ \hline
  
 {\refstepcounter{rownumb-3}\arabic{rownumb-3}} &${\frac{1}{6}}$,${ \frac{1}{6}}$,${ \frac{5}{6}}$,${ \frac{5}{6} }$,$\frac{1}{2}$& $(57,39,-7,-57,-71)$&$(1,0,0,1,0)$ & $(-1,-2,5,-7,3)$\\ \hline

  {\refstepcounter{rownumb-3}\arabic{rownumb-3}} &${ \frac{1}{2}}$,${ \frac{1}{2}}$,${ \frac{1}{2}}$,${ \frac{1}{2}}$,$\frac{1}{2}$&$(3,0,-5,0,35)$ &$(18,83,149,129,45)$&$(1,4,7,8,0)$ \\ \hline

 \refstepcounter{rownumb-3}\arabic{rownumb-3} &$\frac{1}{3}$,$\frac{1}{3}$,$\frac{2}{3}$,$\frac{2}{3}$,$\frac{1}{2}$&$(27,5,-37,-27,155)$&$(2,2,3,4,1)$ &$(-1,0,-1,-2,0)$ \\ \hline
  
 {\refstepcounter{rownumb-3}\arabic{rownumb-3}} &$ { \frac{1}{2}}$,$ {\frac{1}{2}}$,$ {\frac{1}{3}}$,$ { \frac{2}{3}}$,$\frac{1}{2}$&$(67,7,-101,-41,547)$ &$(118,365,551,463,111)$&$(1028,2527,4360,4265,0)$	 \\ \hline
  
  \refstepcounter{rownumb-3}\arabic{rownumb-3}  &$\frac{1}{4}$,$\frac{1}{4}$,$\frac{3}{4}$,$\frac{3}{4}$,$\frac{1}{2}$&$(17,7,-15,-25,17)$&$(1,1,1,1,0)$&$(0,1,1,1,1)$\\ \hline
  
 {\refstepcounter{rownumb-3}\arabic{rownumb-3}} &$ { \frac{1}{2}}$,$ { \frac{1}{2}}$,$ { \frac{1}{4}}$,$ { \frac{3}{4}}$,$\frac{1}{2}$ &$(11,3,-13,-13,43)$&$(-1,-2,0,-2,5)$ &$(6,17,15,22,0)$\\ \hline
  
\refstepcounter{rownumb-3}\arabic{rownumb-3}   &$\frac{1}{3}$,$\frac{2}{3}$,$\frac{1}{4}$,$\frac{3}{4}$,$\frac{1}{2}$&$(115,37,-125,-155,307)$&$(32,41,59,45,27)$ &$(211,175,265,117,0)$ \\ \hline
  
   {\refstepcounter{rownumb-3}\arabic{rownumb-3}}  &$ { 
\frac{1}{5}}$,$ { \frac{2}{5}}$,$ { \frac{3}{5}}$,$ { \frac{4}{5}}$,$\frac{1}{2}$&$(89,39,-71,-121,89)$&$(27,74,43,8,68)$ &$(-1793,-1902,3675,-3760,0)$\\ \hline
  
 {\refstepcounter{rownumb-3}\arabic{rownumb-3}}  &$ { \frac{1}{2}}$,$ { \frac{1}{2}}$,$ { \frac{1}{6}}$,$ { \frac{5}{6}}$,$\frac{1}{2}$&$(27,15,-13,-33,-5)$&$(60,103,25,37,119)$&$(-2723,-3423,2247,-3605,0)$\\ \hline
  
 \refstepcounter{rownumb-3}\arabic{rownumb-3} &$\frac{1}{3}$,$\frac{2}{3}$,$\frac{1}{6}$,$\frac{5}{6}$,$\frac{1}{2}$&$(265,151,-119,-329,-119)$ &$(2,1,-1,6,-2)$&$(4,-9,12,-7,0)$\\ \hline
  
  {\refstepcounter{rownumb-3}\arabic{rownumb-3}}  &${ \frac{1}{4}}$,${ \frac{3}{4}}$,${ \frac{1}{6}}$,${ \frac{5}{6}}$,$\frac{1}{2}$&$(35,21,-13,-43,-29)$&$(2,3,1,3,3)$ &$(7,3,-7,21,0)$ \\ \hline
  
   {\refstepcounter{rownumb-3}\arabic{rownumb-3}} &$ { \frac{1}{8}}$,$ { \frac{3}{8}}$,$ { 
\frac{5}{8}}$,$ { \frac{7}{8}}$,$\frac{1}{2}$&$(65,47,1,-49,-63)$&$(4,-3,1,1,1)$ &$(-7,9,-7,1,0)$\\ \hline
  
  {\refstepcounter{rownumb-3}\arabic{rownumb-3}} &${ \frac{1}{10}}$,${ \frac{3}{10}}$,${ \frac{7}{10}}$,${ \frac{9}{10}}$,$\frac{1}{2}$&$(141,115,45,-45,-115)$&$(-4,6,-4,1,-1)$ &$(-7,16,-13,4,0)$\\ \hline
  
   {\refstepcounter{rownumb-3}\arabic{rownumb-3}} &$ { \frac{1}{12}}$,$ { \frac{5}{12}}$,$ { \frac{7}{12}}$,$ { \frac{11}{12}}$,$\frac{1}{2}$& $(257,223,129,-1,-127)$& $(9,9,-17,-2,15)$&$(7,7,-23,17,0)$ \\ \hline  
\end{tabular}
\label{table:calabiyau2}
\end{table}}
 
{{\subsection{Structure of the unipotent groups corresponding to the roots of $\mathrm{SO}_\mathrm{Q}$}\label{structure}
 Let $\{\e_1,\e_2,u,\e_2^*,\e_1^*\}$ be a basis of $\Q^5$ over $\Q$, with respect to which, the matrix form $\mathrm{M}_\mathrm{Q}$ of the quadratic form $\mathrm{Q}$, is:
{\tiny\begin{equation}\label{matrixform}
\mathrm{M}_\mathrm{Q}=\begin{pmatrix}
0 &0 &0 &0 &\lambda_1\\
0 &0 &0 &\lambda_2 &0\\
0 &0 &\lambda_3 &0 &0\\
0 &\lambda_2 &0 &0 &0\\
\lambda_1 &0 &0 &0 &0
\end{pmatrix}\end{equation}}where $\mathrm{Q}(\e_i,\e_i^*)=\lambda_i\in\Q^{*},\ \forall 1\leq{i}\leq 2$, and $\mathrm{Q}(u,u)=\lambda_3\in\Q^{*}$. Then, it can be checked easily that the diagonal matrices in $\mathrm{SO}_\mathrm{Q}$ form a maximal torus $\mathrm{T}$, that is,
{\tiny\[\mathrm{T}=\left\{\begin{pmatrix}
t_1 &0 &0 &0 &0\\
0 &t_2 &0 &0 &0\\
0 &0 &1 &0 &0\\
0 &0 &0 &t_2^{-1} &0\\
0 &0 &0  &0 &t_1^{-1}
\end{pmatrix} : t_i\in\Q^*,\quad \forall\ 1\leq i\leq 2\right\}\]}is a maximal torus in $\mathrm{SO}_\mathrm{Q}$. Once we fix a maximal torus $\mathrm{T}$ in $\mathrm{SO}_\mathrm{Q}$, one may compute the root system $\Phi$ for $\mathrm{SO}_\mathrm{Q}$. If we denote by $\mathbf{t}_i$, the character of $\mathrm{T}$ defined by
{\tiny\[\begin{pmatrix}
t_1 &0 &0 &0 &0\\
0 &t_2 &0 &0 &0\\
0 &0 &1 &0 &0\\
0 &0 &0 &t_2^{-1} &0\\
0 &0 &0  &0 &t_1^{-1}
\end{pmatrix}\longmapsto t_i,\qquad\mbox{for }i=1,2,\]}then the roots are: $$\Phi=\{\mathbf{t}_1, \mathbf{t}_2, \mathbf{t}_1\mathbf{t}_2, \mathbf{t}_1\mathbf{t}_2^{-1},\mathbf{t}_1^{-1},\mathbf{t}_2^{-1}, \mathbf{t}_1^{-1}\mathbf{t}_2^{-1}, \mathbf{t}_2\mathbf{t}_1^{-1}\}.$$ If we fix a set of simple roots $$\Delta=\{ \mathbf{t}_2, \mathbf{t}_1\mathbf{t}_2^{-1}\},$$ then the set of positive roots $$\Phi^+=\{\mathbf{t}_2, \mathbf{t}_1\mathbf{t}_2^{-1}, \mathbf{t}_1, \mathbf{t}_1\mathbf{t}_2\},$$ the set of negative roots $$\Phi^-=\{\mathbf{t}_2^{-1}, \mathbf{t}_2\mathbf{t}_1^{-1}, \mathbf{t}_1^{-1}, \mathbf{t}_1^{-1}\mathbf{t}_2^{-1}\};$$ and $\mathbf{t}_1\mathbf{t}_2$, $\mathbf{t}_1$ are, respectively, the {\it highest}  and  {\it second highest} roots in  $\Phi^+$. 

A computation shows that the unipotent group $\mathrm{U}_\alpha$, corresponding to a positive root $\alpha\in\Phi^+$, is given as follows: 
{\tiny \[\mathrm{U}_{\mathbf{t}_2}=\left\{\begin{pmatrix}
1 &0 &0 &0 &0\\
0 &1 &x &-\frac{\lambda_2}{2\lambda_3}x^2 &0\\
0 &0 &1 &-\frac{\lambda_2}{\lambda_3}x &0\\
0 &0 &0 &1 &0\\
0 &0 &0  &0 &1
\end{pmatrix}: x\in\Q\right\}, \quad \mathrm{U}_{\mathbf{t}_1\mathbf{t}_2^{-1}}=\left\{\begin{pmatrix}
1 &x &0 &0 &0\\
0 &1 &0 &0 &0\\
0 &0 &1 &0 &0\\
0 &0 &0 &1 &-\frac{\lambda_1}{\lambda_2}x\\
0 &0 &0  &0 &1
\end{pmatrix}: x\in\Q\right\},\]}
{\tiny \[\mathrm{U}_{\mathbf{t}_1}=\left\{\begin{pmatrix}
1 &0 &x &0 &-\frac{\lambda_1}{2\lambda_3}x^2\\
0 &1 &0 &0 &0\\
0 &0 &1 &0 &-\frac{\lambda_1}{\lambda_3}x\\
0 &0 &0 &1 &0\\
0 &0 &0  &0 &1
\end{pmatrix}: x\in\Q\right\}, \quad \mathrm{U}_{\mathbf{t}_1\mathbf{t}_2}=\left\{\begin{pmatrix}
1 &0 &0 &x &0\\
0 &1 &0 &0 &-\frac{\lambda_1}{\lambda_2}x\\
0 &0 &1 &0 &0\\
0 &0 &0 &1 &0\\
0 &0 &0  &0 &1
\end{pmatrix}: x\in\Q\right\},\]}where $\lambda_1, \lambda_2,$ and $\lambda_3$ are the (non-zero) anti-diagonal entries of the matrix form $\mathrm{M}_\mathrm{Q}$ of the quadratic form $\mathrm{Q}$ (cf. (\ref{matrixform})).

A similar computation shows that, if $\mathrm{U}_\alpha$ is the unipotent group, corresponding to a positive root $\alpha\in\Phi^+$, then the group $$\mathrm{U}_{\alpha^{-1}}:=\left\{u\left(\frac{\lambda_j}{\lambda_i}\right)^t : u\left(\frac{\lambda_i}{\lambda_j}\right)\in\mathrm{U}_\alpha\right\}$$ is the unipotent group, corresponding to the negative root $\alpha^{-1}\in\Phi^-$, where the element $u\left(\frac{\lambda_j}{\lambda_i}\right)^t$ is defined in the following way: we first replace $\frac{\lambda_i}{\lambda_j}$ with $\frac{\lambda_j}{\lambda_i}$ in the matrix $u\left(\frac{\lambda_i}{\lambda_j}\right)\in\mathrm{U}_\alpha$, and then take the transpose.}}

\section{Proof of Theorem \ref{maintheorem}}\label{proof}
 In this section we explicitly compute  the quadratic forms $\mathrm{Q}$ preserved by the two orthogonal hypergeometric groups of Theorem \ref{maintheorem}, and show that the orthogonal groups $\mathrm{O}_\mathrm{Q}$ have $\Q$- rank two, and prove the arithmeticity of $\Gamma(f,g)$ in $\mathrm{O}_\mathrm{Q}$ by using \cite{T} (cf. \cite[Theorem 3.5]{Ve}).
 \subsection{Arithmeticity of the hypergeometric group associated to the pair $\alpha=(0,0,0,0,0)$, $\beta=(\frac{1}{6},\frac{1}{6},\frac{5}{6},\frac{5}{6},\frac{1}{2})$}\label{orthogonal-calabi-yau1}
This is Example \ref{arithmeticYYCE-1} of Table \ref{table:calabiyau}. In this case \[f(X)=(X-1)^5=X^5-5X^4+10X^3-10X^2+5X-1,\] \[g(X)=(X^2-X+1)^2(X+1)=X^5-X^4+X^3+X^2-X+1.\]

Let $A$ and $B$  be the companion  matrices of $f(X)$ and  $g(X)$ resp., and let $C=A^{-1}B$. Then
{\tiny \[A=\begin{pmatrix}  \begin {array}{rrrrr} 0&0&0&0&1\\ \noalign{\medskip}1&0&0&0&-5
\\ \noalign{\medskip}0&1&0&0&10\\ \noalign{\medskip}0&0&1&0&-10
\\ \noalign{\medskip}0&0&0&1&5\end {array}
  \end{pmatrix},  B=\begin{pmatrix}\begin {array}{rrrrr} 0&0&0&0&-1\\ \noalign{\medskip}1&0&0&0&1
\\ \noalign{\medskip}0&1&0&0&-1\\ \noalign{\medskip}0&0&1&0&-1
\\ \noalign{\medskip}0&0&0&1&1\end {array}
 \end{pmatrix}, C=\begin{pmatrix} \begin {array}{rrrrr} 1&0&0&0&-4\\ \noalign{\medskip}0&1&0&0&9
\\ \noalign{\medskip}0&0&1&0&-11\\ \noalign{\medskip}0&0&0&1&6
\\ \noalign{\medskip}0&0&0&0&-1\end {array}
 \end{pmatrix}.\]}   Let $\Gamma=\Gamma(f,g)=<A,B>$ be the  subgroup  of
$\GL_5(\Z)$ generated by $A$ and $B$. 

\subsection*{The invariant quadratic form} 
Note that $A^{-1}B=B^{-1}A$; and the statements of Lemma \ref{basis} and the succeeding paragraph, are unchanged, if we replace $A$ by $B$. Therefore the vectors in the set $\{v, Bv, B^2v, B^3v, B^4v\}$ form a basis of $\Q^5$ over $\Q$. 

Recall that $v$ is the vector $(C-\mathrm{I})e_5$; and $e_1,e_2,e_3,e_4,e_5$ are the standard basis vectors of $\Q^5$ over $\Q$.
By computation, we get
\[ v=-4e_1+9e_2-11e_3+6e_4-2e_5, Bv=2e_1-6e_2+11e_3-9e_4+4e_5,\]
\[ B^2v=-4e_1+6e_2-10e_3+7e_4-5e_5, B^3v=5e_1-9e_2+11e_3-5e_4+2e_5\]
\[B^4v=-2e_1+7e_2-11e_3+9e_4-3e_5.\]

Recall also from the paragraph preceding to Lemma \ref{basis}, that the vector $v$ is $\mathrm{Q}$- orthogonal to the vectors $e_1,e_2,e_3,e_4$, and $\mathrm{Q}(v, e_5)=1$ (say, since $\mathrm{Q}(v, e_5)\neq0$). Therefore we get \[\mathrm{Q}(v, B^jv)=\mbox{ coefficient of }e_5\mbox{ in }B^jv,\mbox{ for all }j=0,1,2,3,4,\] and hence with respect to the basis $\{v, Bv, B^2v, B^3v, B^4v\}$ of $\Q^5$, the matrix form of $\mathrm{Q}$ is:
 {\tiny \[M_\mathrm{Q}=\begin{pmatrix}
\begin {array}{rrrrr} -2&4&-5&2&-3\\ \noalign{\medskip}4&-2&4&
-5&2\\ \noalign{\medskip}-5&4&-2&4&-5\\ \noalign{\medskip}2&-5&4&-2&4
\\ \noalign{\medskip}-3&2&-5&4&-2\end {array}
\end{pmatrix}.\]} Note that we have used the invariance of $\mathrm{Q}$, under the action of $B$, to write the above matrix form of $\mathrm{Q}$.

If we denote by $T$,  the matrix to change the basis $\{e_1,e_2,e_3,e_4,e_5\}$ to $\{v, Bv, B^2v, B^3v, B^4v\}$, that is,
{\tiny \[T=\begin{pmatrix}
     \begin {array}{rrrrr} -4&2&-4&5&-2\\ \noalign{\medskip}9&-6&6&
-9&7\\ \noalign{\medskip}-11&11&-10&11&-11\\ \noalign{\medskip}6&-9&7&
-5&9\\ \noalign{\medskip}-2&4&-5&2&-3\end {array}
    \end{pmatrix},\]} then the matrix form of $\mathrm{Q}$, with respect to the basis $\{e_1,e_2,e_3,e_4,e_5\}$, is
    \[N_\mathrm{Q}= (T^{-1})^t.M_\mathrm{Q}.T^{-1},\] which is preserved by $A$ and $B$, that is, \[A^t.N_\mathrm{Q}.A=N_\mathrm{Q}=B^t.N_\mathrm{Q}.B,\]
    where $(T^{-1})^t, A^t, B^t$ denote the transpose of the respective matrices.
    
It is now clear from the matrix form $M_\mathrm{Q}$ of $\mathrm{Q}$, that \[\e_1=v+B^3v=e_1+e_4\] is an isotropic vector of $\mathrm{Q}$; and by computation, we get \[\e_1^*=v+B^3v+B^4v=-e_1+7e_2-11e_3+10e_4-3e_5\] is another isotropic vector of $\mathrm{Q}$, such that $\{\e_1,\e_1^*\}$ is linearly independent and $\mathrm{Q}(\e_1, \e_1^*)=1$.

If we denote by $E=\Q\e_1\oplus\Q\e_1^*$, then $\Q^5=E\oplus E^\perp$ and the restriction of $\mathrm{Q}$ on $E^\perp$ is non-degenerate, where $E^\perp$ is the $\mathrm{Q}$- orthogonal complement of $E$ in $\Q^5$. We now, by computation, get a basis $\{u_1,u_2,u_3\}$ of $E^\perp$, where
\[u_1=7v+B^2v+7B^3v+B^4v=e_1+13e_2-21e_3+23e_4-8e_5,\]
\[u_2=4v+3B^3v=-e_1+9e_2-11e_3+9e_4-2e_5,\]
\[u_3=-7v+Bv-B^2v-7B^3v=-e_1-12e_2+21e_3-23e_4+9e_5.\]

If we denote by $L$, the matrix to change the basis $\{e_1,e_2,e_3,e_4,e_5\}$ to $\{\e_1,u_1,u_2,u_3,\e_1^*\}$, that is,
{\tiny \[L=\begin{pmatrix}
     \begin {array}{rrrrr} 1&1&-1&-1&-1\\ \noalign{\medskip}0&13&9&
-12&7\\ \noalign{\medskip}0&-21&-11&21&-11\\ \noalign{\medskip}1&23&9&
-23&10\\ \noalign{\medskip}0&-8&-2&9&-3\end {array}
    \end{pmatrix},\]} then the matrix form of $\mathrm{Q}$, with respect to $\{\e_1,u_1,u_2,u_3,\e_1^*\}$, is:
    {\tiny\[L^t.N_\mathrm{Q}.L=\begin{pmatrix}\begin {array}{rrrrr} 0&0&0&0&1\\ \noalign{\medskip}0&-14&-8&
13&0\\ \noalign{\medskip}0&-8&-2&9&0\\ \noalign{\medskip}0&13&9&-12&0
\\ \noalign{\medskip}1&0&0&0&0\end {array}\end{pmatrix}.\]}

We now change the basis $\{u_1,u_2,u_3\}$ of $E^\perp$ to another basis $\{\e_2,u,\e_2^*\}$ with respect to which, the restriction of $\mathrm{Q}$ on $E^\perp$ is {\it anti-diagonal}, where
\[\e_2=-\frac{1}{2}(3v+B^2v+4B^3v+B^4v)=-e_1-2e_2+5e_3-7e_4+3e_5,\]
\[\e_2^*=Bv+B^4v=e_2+e_5,\]
\[u=(4v+3B^3v)-\e_2-3\e_2^*=8e_2-16e_3+16e_4-8e_5.\]

If we denote by $K$, the matrix to change the basis $\{e_1,e_2,e_3,e_4,e_5\}$ to $\{\e_1,\e_2,u,\e_2^*,\e_1^*\}$, that is,
{\tiny \[K=\begin{pmatrix}
     \begin {array}{rrrrr} 1&-1&0&0&-1\\ \noalign{\medskip}0&-2&8&1
&7\\ \noalign{\medskip}0&5&-16&0&-11\\ \noalign{\medskip}1&-7&16&0&10
\\ \noalign{\medskip}0&3&-8&1&-3\end {array}
    \end{pmatrix},\]} then the matrix form of $\mathrm{Q}$, with respect to the basis $\{\e_1,\e_2,u,\e_2^*,\e_1^*\}$, is:
    {\tiny\[K^t.N_\mathrm{Q}.K=\begin{pmatrix}\begin {array}{rrrrr} 0&0&0&0&1\\ \noalign{\medskip}0&0&0&1&0
\\ \noalign{\medskip}0&0&-8&0&0\\ \noalign{\medskip}0&1&0&0&0
\\ \noalign{\medskip}1&0&0&0&0\end {array}\end{pmatrix}.\]}
Note that, the above matrix form of $\mathrm{Q}$, with respect to the $\Q$- basis $\{\e_1,\e_2,u,\e_2^*,\e_1^*\}$ of $\Q^5$, shows that the $\Q$-rank of  the orthogonal group $\mathrm{O}_\mathrm{Q}$ is {\it two}.

Recall that, with respect to the basis $\{\e_1,\e_2,u,\e_2^*,\e_1^*\}$ of $\Q^5$, the group of diagonal matrices in $\mathrm{SO}_\mathrm{Q}$ form a maximal torus $\mathrm{T}$, the group of upper (resp. lower) triangular matrices in $\mathrm{SO}_\mathrm{Q}$ form a Borel subgroup $\mathrm{B}$ (resp. $\mathrm{B}^-$, opposite to $\mathrm{B}$), and the group of unipotent upper (resp. lower) triangular matrices in $\mathrm{SO}_\mathrm{Q}$ form the unipotent radical $\mathrm{U}$ (resp. $\mathrm{U}^-$, opposed to $\mathrm{U}$) of $\mathrm{B}$ (resp. $\mathrm{B}^-$).

\subsection*{Proof of the arithmeticity of $\Gamma$} 
To prove the arithmeticity of $\Gamma$, we show that with respect to the basis $\{\e_1,\e_2,u,\e_2^*,\e_1^*\}$ of $\Q^5$, $\Gamma$ intersects $\mathrm{U}^-(\Z)$ in a finite index subgroup, that is, $\Gamma\cap\mathrm{U}^-(\Z)$ is a subgroup of finite index in $\mathrm{U}^-(\Z)$, and use a theorem of Tits \cite{T}. Note that the proof also follows from a theorem of Venkataramana \cite[Theorem 3.5]{Ve}.

Also, to show that $\Gamma\cap\mathrm{U}^-(\Z)$ is a subgroup of finite index in $\mathrm{U}^-(\Z)$, it is enough to show that $\Gamma\cap\mathrm{U}^-(\Z)$ is Zariski dense in $\mathrm{U}^-$ (since $\mathrm{U}^-$ is a nilpotent subgroup of $\GL_5(\R)$;  cf. \cite[Theorem 2.1] {Rag}); and we show it, by showing that, $\Gamma\cap\mathrm{U}^-(\Z)$ contains {\it non-trivial} unipotent elements corresponding to all {\it negative} roots of $\mathrm{SO}_\mathrm{Q}$. 

Let $x, y$ denote the matrices $A, B$ resp., with respect to the basis $\{\e_1,\e_2,u,\e_2^*,\e_1^*\}$,  that is, $x=K^{-1}AK$ and $y=K^{-1}BK$. Then 
{\tiny\[x=\begin{pmatrix}
                \begin {array}{rrrrr} 0&-1&0&0&2\\ \noalign{\medskip}0&0&0&0&1
\\ \noalign{\medskip}0&1&-1&0&-1\\ \noalign{\medskip}1&4&-8&2&-4
\\ \noalign{\medskip}0&-4&8&-1&4\end {array}
               \end{pmatrix},\qquad y=\begin{pmatrix}\begin {array}{rrrrr} 0&-4&8&-1&5\\ \noalign{\medskip}0&0&0&0&
1\\ \noalign{\medskip}0&1&-1&0&-1\\ \noalign{\medskip}1&1&0&1&-1
\\ \noalign{\medskip}0&-1&0&0&1\end {array}
\end{pmatrix}.\]}

We now denote by $c_1,c_2,\ldots,c_{19}$, the following words in $x$ and $y$:
\[c_1=x^{-1}y,\quad c_2=y^{-3}c_1y^3,\quad c_3=c_1^{-1}c_2, \quad c_4=y^3c_1y^{-3},\] \[c_5=y^{-9}c_1y^9,\quad c_6=xy^{-1},\quad c_7=y^3c_6y^{-3},\quad c_8=y^6,\quad c_9=c_6c_7^{-1}, \] \[c_{10}=c_9^2,\quad c_{11}=c_{10}c_8,\quad c_{12}=(yc_1c_8c_1^{-1})^3,\quad c_{13}=[c_{12},c_{11}],\] \[ c_{14}=c_{13}c_8^{-64},\quad c_{15}=[c_5,c_{11}],\quad c_{16}=[c_{15},c_{14}],\quad c_{17}=c_{11}^{320}c_{16},\] \[c_{18}=c_{15}^8c_{16},\quad c_{19}=c_{11}^{296}c_{18},\] where $[a,b]$ denotes the commutator of $a, b$, that is, $[a,b]=aba^{-1}b^{-1}$. Then
{\tiny\[c_{11}=\begin{pmatrix}\begin {array}{rrrrr} 1&0&0&0&0\\ \noalign{\medskip}0&1&0&0&0
\\ \noalign{\medskip}0&0&1&0&0\\ \noalign{\medskip}2&0&0&1&0
\\ \noalign{\medskip}0&-2&0&0&1\end {array}\end{pmatrix},\qquad c_{14}=\begin{pmatrix}\begin {array}{rrrrr} 1&0&0&0&0\\ \noalign{\medskip}0&1&0&0&0
\\ \noalign{\medskip}0&8&1&0&0\\ \noalign{\medskip}0&256&64&1&0
\\ \noalign{\medskip}0&0&0&0&1\end {array}
 \end{pmatrix},\]  

\[c_{17}=\begin{pmatrix}\begin {array}{rrrrr} 1&0&0&0&0\\ \noalign{\medskip}0&1&0&0&0
\\ \noalign{\medskip}-16&0&1&0&0\\ \noalign{\medskip}0&0&0&1&0
\\ \noalign{\medskip}1024&0&-128&0&1\end {array}
 \end{pmatrix},\qquad c_{19}=\begin{pmatrix}\begin {array}{rrrrr} 1&0&0&0&0\\ \noalign{\medskip}16&1&0&0&0
\\ \noalign{\medskip}0&0&1&0&0\\ \noalign{\medskip}0&0&0&1&0
\\ \noalign{\medskip}0&0&0&-16&1\end {array} 
 \end{pmatrix}.\]}
 
 It is now clear from the above computations that the elements $c_{11}$, $c_{14}$, $c_{17}$, $c_{19}\in\Gamma\cap\mathrm{U}^-(\Z)$, are non-trivial, and correspond to the {\it negative} { roots} of $\mathrm{SO}_\mathrm{Q}$ {(cf. Subsection \ref{structure})}, that is, $\Gamma\cap\mathrm{U}^-(\Z)$ is Zariski dense in $\mathrm{U}^-$, and hence $\Gamma\cap\mathrm{U}^-(\Z)$ is of finite index in $\mathrm{U}^-(\Z)$ (since $\mathrm{U}^-$ is a nilpotent subgroup of $\GL_5(\R)$). The proof now follows from \cite{T} (cf. \cite[Theorem 3.5]{Ve}). \qed
 
 \subsection{Arithmeticity of the hypergeometric group associated to the pair $\alpha=(0,0,0,0,0)$, $\beta=(\frac{1}{4},\frac{1}{4},\frac{3}{4},\frac{3}{4},\frac{1}{2})$}\label{orthogonal-calabi-yau5}
This is Example \ref{arithmeticYYCE-5} of Table \ref{table:calabiyau}. In this case \[f(X)=(X-1)^5=X^5-5X^4+10X^3-10X^2+5X-1,\] \[g(X)=(X^2+1)^2(X+1)=X^5+X^4+2X^3+2X^2+X+1.\]

Let $A$ and $B$  be the companion  matrices of $f(X)$ and  $g(X)$ resp., and let $C=A^{-1}B$. Then
{\tiny \[A=\begin{pmatrix}  \begin {array}{rrrrr} 0&0&0&0&1\\ \noalign{\medskip}1&0&0&0&-5
\\ \noalign{\medskip}0&1&0&0&10\\ \noalign{\medskip}0&0&1&0&-10
\\ \noalign{\medskip}0&0&0&1&5\end {array}
  \end{pmatrix},  B=\begin{pmatrix}\begin {array}{rrrrr} 0&0&0&0&-1\\ \noalign{\medskip}1&0&0&0&-
1\\ \noalign{\medskip}0&1&0&0&-2\\ \noalign{\medskip}0&0&1&0&-2
\\ \noalign{\medskip}0&0&0&1&-1\end {array}
 \end{pmatrix}, C=\begin{pmatrix} \begin {array}{rrrrr} 1&0&0&0&-6\\ \noalign{\medskip}0&1&0&0&8
\\ \noalign{\medskip}0&0&1&0&-12\\ \noalign{\medskip}0&0&0&1&4
\\ \noalign{\medskip}0&0&0&0&-1\end {array}
 \end{pmatrix}.\]}   Let $\Gamma=\Gamma(f,g)=<A,B>$ be the  subgroup  of
$\GL_5(\Z)$ generated by $A$ and $B$. 

\subsection*{The invariant quadratic form} Let  $e_1,e_2,e_3,e_4,e_5$ be the standard basis vectors of $\Q^5$ over $\Q$, and  $v=(C-\mathrm{I})e_5$. By doing similar computations as in Subsection \ref{orthogonal-calabi-yau1}, we find that the matrix form of $\mathrm{Q}$, with respect to the basis $\{v, Bv, B^2v, B^3v, B^4v\}$ of $\Q^5$,  is:
 {\tiny \[M_\mathrm{Q}=\begin{pmatrix}
\begin {array}{rrrrr} -2&6&-14&14&-2\\ \noalign{\medskip}6&-2&
6&-14&14\\ \noalign{\medskip}-14&6&-2&6&-14\\ \noalign{\medskip}14&-14
&6&-2&6\\ \noalign{\medskip}-2&14&-14&6&-2\end {array}
\end{pmatrix}.\]}

If we denote by $T$,  the matrix to change the basis $\{e_1,e_2,e_3,e_4,e_5\}$ to $\{v, Bv, B^2v, B^3v, B^4v\}$, that is,
{\tiny \[T=\begin{pmatrix}
     \begin {array}{rrrrr} -6&2&-6&14&-14\\ \noalign{\medskip}8&-4&
-4&8&0\\ \noalign{\medskip}-12&12&-16&24&-20\\ \noalign{\medskip}4&-8&0
&12&-4\\ \noalign{\medskip}-2&6&-14&14&-2\end {array}
    \end{pmatrix},\]} then the matrix form of $\mathrm{Q}$, with respect to the basis $\{e_1,e_2,e_3,e_4,e_5\}$, is
    \[N_\mathrm{Q}= (T^{-1})^t.M_\mathrm{Q}.T^{-1},\] which is preserved by $A$ and $B$, that is, \[A^t.N_\mathrm{Q}.A=N_\mathrm{Q}=B^t.N_\mathrm{Q}.B.\]

If we change the basis $\{e_1,e_2,e_3,e_4,e_5\}$ to $\{\e_1,\e_2,u,\e_2^*,\e_1^*\}$ by the matrix $K$, where
{\tiny \[K=\begin{pmatrix}
     \begin {array}{rrrrr} 8&0&-32&-5&{\frac {15}{8}}
\\ \noalign{\medskip}8&4&0&-\frac{5}{4}&{\frac {9}{8}}\\ \noalign{\medskip}8&4
&-32&-{\frac {25}{4}}&{\frac {25}{8}}\\ \noalign{\medskip}8&4&0&-\frac{9}{4}&{
\frac {13}{8}}\\ \noalign{\medskip}0&4&0&-\frac{1}{4}&\frac{7}{4}\end {array}
\end{pmatrix},\]} then the matrix form of $\mathrm{Q}$, with respect to the basis $\{\e_1,\e_2,u,\e_2^*,\e_1^*\}$, is:
    {\tiny\[K^t.N_\mathrm{Q}.K=\begin{pmatrix}\begin {array}{rrrrr} 0&0&0&0&1\\ \noalign{\medskip}0&0&0&1&0
\\ \noalign{\medskip}0&0&-32&0&0\\ \noalign{\medskip}0&1&0&0&0
\\ \noalign{\medskip}1&0&0&0&0\end {array}
\end{pmatrix}.\]}
Note that, the above matrix form of $\mathrm{Q}$, with respect to the $\Q$- basis $\{\e_1,\e_2,u,\e_2^*,\e_1^*\}$ of $\Q^5$, shows that the $\Q$-rank of  the orthogonal group $\mathrm{O}_\mathrm{Q}$ is {\it two}.

\subsection*{Proof of the arithmeticity of $\Gamma$} 
Let $x, y$ denote the matrices $A, B$ resp., with respect to the basis $\{\e_1,\e_2,u,\e_2^*,\e_1^*\}$,  that is, $x=K^{-1}AK$ and $y=K^{-1}BK$. Then 
{\tiny\[x=\begin{pmatrix}
                \begin {array}{rrrrr} 0&-8&-4&-\frac{1}{32}&-{\frac {51}{16}}
\\ \noalign{\medskip}2&20&0&-{\frac {7}{8}}&{\frac {35}{4}}
\\ \noalign{\medskip}0&-4&-1&0&-\frac{7}{4}\\ \noalign{\medskip}0&0&0&0&\frac{1}{2}
\\ \noalign{\medskip}0&-32&0&0&-14\end {array}
               \end{pmatrix},\qquad y=\begin{pmatrix}\begin {array}{rrrrr} 0&-\frac{1}{2}&-4&-\frac{1}{2}&{\frac {3}{32}}
\\ \noalign{\medskip}2&0&0&\frac{3}{8}&0\\ \noalign{\medskip}0&0&-1&-\frac{1}{4}&0
\\ \noalign{\medskip}0&0&0&0&\frac{1}{2}\\ \noalign{\medskip}0&0&0&-2&0
\end {array}
\end{pmatrix}.\]}

We now denote by $c_1,c_2,\ldots,c_{17}$, the following words in $x$ and $y$:
\[c_1=x^{-1}y,\quad c_2=y^{16},\quad c_3=y^{-1}c_1y, \quad c_4=y^{-2}c_1y^{2},\] \[c_5=c_1^{-1}c_4,\quad c_6=y^2c_1y^{-2},\quad c_7=c_4^{-1}c_6,\quad c_8=c_7^4c_2^{-15},\quad c_9=c_1c_8c_1^{-1}, \] \[c_{10}=c_2^{-1}c_9,\quad c_{11}=c_8c_{10}^{-1},\quad c_{12}=xy^{-1},\quad c_{13}=y^{-6}c_{12}y^6,\] \[ c_{14}=c_{13}c_2c_{13}^{-1},\quad c_{15}=[c_{13},c_{14}],\quad c_{16}=[c_{15},c_{10}],\quad c_{17}=c_{16}c_2^{-16384}.\] Then
{\tiny\[c_2=\begin{pmatrix}\begin {array}{rrrrr} 1&0&0&-1&0\\ \noalign{\medskip}0&1&0&0&1
\\ \noalign{\medskip}0&0&1&0&0\\ \noalign{\medskip}0&0&0&1&0
\\ \noalign{\medskip}0&0&0&0&1\end {array}
             \end{pmatrix},\qquad c_{10}=\begin{pmatrix}\begin {array}{rrrrr} 1&0&-128&0&256\\ \noalign{\medskip}0&1&0
&0&0\\ \noalign{\medskip}0&0&1&0&-4\\ \noalign{\medskip}0&0&0&1&0
\\ \noalign{\medskip}0&0&0&0&1\end {array}
 \end{pmatrix},\] \[c_{11}=\begin{pmatrix}\begin {array}{rrrrr} 1&-16&0&0&0\\ \noalign{\medskip}0&1&0&0&0
\\ \noalign{\medskip}0&0&1&0&0\\ \noalign{\medskip}0&0&0&1&16
\\ \noalign{\medskip}0&0&0&0&1\end {array}
\end{pmatrix},\qquad c_{17}=\begin{pmatrix}\begin {array}{rrrrr} 1&0&0&0&0\\ \noalign{\medskip}0&1&8192&
1048576&0\\ \noalign{\medskip}0&0&1&256&0\\ \noalign{\medskip}0&0&0&1&0
\\ \noalign{\medskip}0&0&0&0&1\end {array}
 \end{pmatrix}.\]}
 
 It is now clear from the above computations that the elements $c_2$, $c_{10}$, $c_{11}$, $c_{17}\in\Gamma\cap\mathrm{U}(\Z)$, are non-trivial, and correspond to the {\it positive} { roots} of $\mathrm{SO}_\mathrm{Q}$ {(cf. Subsection \ref{structure})}, that is, $\Gamma\cap\mathrm{U}(\Z)$ is Zariski dense in $\mathrm{U}$, and hence $\Gamma\cap\mathrm{U}(\Z)$ is of finite index in $\mathrm{U}(\Z)$. The proof now follows from \cite{T} (cf. \cite[Theorem 3.5]{Ve}). \qed

\end{document}